\theoremstyle{plain}
    \newtheorem{thm}{Theorem}
    \newtheorem{cor}[thm]{Corollary}
    \newtheorem{prop}{Proposition}[section]
    \newtheorem{lemma}[prop]{Lemma}
\theoremstyle{definition}
\theoremstyle{remark}
    \newtheorem{rem}[prop]{Remark}
\def\vol{{\operatorname{vol}}}
\def\inter{\mathrm{int}}
\def\rank{\mathrm{rank}}
\def\div{\mathrm{div}}
\newcommand{\R}{\mathbb{R}}\newcommand{\Z}{\mathbb{Z}}\newcommand{\N}{\mathbb{N}}
\renewcommand{\setminus}{\smallsetminus}
\renewcommand{\emptyset}{\varnothing}
\newcommand{\id}{\mathit{id}}
\newcommand{\comm}[1]{}
\begin{document}

\title{On the regularization of conservative maps}

\author[A.~Avila]{Artur Avila}
\address{CNRS UMR 7599,
Laboratoire de Probabilit\'es et Mod\`eles al\'eatoires.
Universit\'e Pierre et Marie Curie--Bo\^\i te courrier 188.
75252--Paris Cedex 05, France}
\curraddr{IMPA,
Estrada Dona Castorina 110, Rio de Janeiro, Brasil} 
\urladdr{www.proba.jussieu.fr/pageperso/artur/}
\email{artur@math.sunysb.edu}

\begin{abstract}
We show that smooth maps are $C^1$-dense among $C^1$ volume preserving maps.
\end{abstract}

\date{\today}

\maketitle




\section{Introduction}

Let $M$ and $N$ be $C^\infty$ manifolds\footnote{All manifolds will be
assumed to be Hausdorff, paracompact and without boundary, but possibly not
compact.} and let $C^r(M,N)$ ($r \in \N \cup
\{\infty\}$) be the space of
$C^r$ maps from $M$ to $N$, endowed with the Whitney topology.
It is a well known fact that $C^\infty$ maps are dense in $C^r(M,N)$.  Such
a result is very useful in differentiable topology and
in dynamical systems (as we will discuss in more detail).  On the other
hand, in closely
related contexts, it is the non-existence of a regularization theorem that
turns out to be remarkable: if homeomorphisms could always be approximated
by diffeomorphisms then the whole theory of exotic structures would
not exist.

Palis and Pugh \cite {PP}
seem to have been the first to ask about the
corresponding regularization results in the case of conservative and
symplectic maps.  Here one fixes $C^\infty$ volume forms\footnote {For
non-orientable manifolds, a volume form should be understood up to sign.}
(in the conservative case) or symplectic structures (symplectic case), and
asks whether smoother maps in the corresponding class are dense with
respect to the induced Whitney topology.
The first result in this direction was due to Zehnder \cite {Z}, who
provided regularization theorems for symplectic maps, based on the use of
generating functions.  He also provided a
regularization theorem for conservative maps, but only when $r>1$ (he did
manage to treat also non-integer $r$).  The case $r=1$ however has
remained open since then (due in large part to intrinsic difficulties
relating to the PDE's involved in Zehnder's approach,
which we will discuss below),
except in dimension $2$, where it is equivalent to the symplectic case.
This is the problem we address in this paper.  Let $C^r_\vol(M,N) \subset
C^r(M,N)$ be the subset of maps that preserve the fixed smooth
volume forms.

\begin{thm} \label {dens}

$C^\infty$ maps are dense in $C^1_\vol(M,N)$.

\end{thm}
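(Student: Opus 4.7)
The plan is to reduce to a local problem in Euclidean space, apply a standard mollification to produce a smooth but not quite volume-preserving approximation, and then construct a smooth volume-preserving correction that is $C^1$-close to the identity. First, a partition-of-unity / chart argument reduces Theorem \ref{dens} to the following statement: given $f \in C^1_\vol(U,\R^n)$ defined on an open set $U \subset \R^n$ that coincides with the identity outside a compact subset $K \subset U$, and given $\delta > 0$, there is a $C^\infty$ volume-preserving map $g$ with $\|g - f\|_{C^1} < \delta$ that agrees with the identity outside a slightly larger compact set. Linearity of the fiber of the volume bundle lets us transfer corrections between charts.

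Next, convolving $f$ with a standard mollifier $\rho_\eps$ produces a smooth map $\tilde f$ with $\|\tilde f - f\|_{C^1}$ arbitrarily small as $\eps \to 0$. The Jacobian $\phi := \det D\tilde f$ satisfies $\|\phi - 1\|_{C^0} \to 0$, but in general $\phi$ is not identically $1$ and has no better than continuous regularity coming from $f$. To repair $\tilde f$, one looks for a smooth $\tilde g$, $C^1$-close to the identity, with $\det D\tilde g = \phi$, so that $\tilde f \circ \tilde g^{-1}$ is smooth and volume-preserving. The classical Dacorogna--Moser approach to this Jacobian equation proceeds by solving a Poisson-type equation $\Delta u = \phi - 1$ and taking $\tilde g$ as the time-one map of a flow involving $\nabla u$; this requires Hölder control on $\phi$ to bound $\|D\tilde g - \Id\|_{C^0}$, which is exactly Zehnder's reason for needing $r > 1$.

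The main obstacle is to bypass this elliptic loss of derivative at $r=1$, where $\phi - 1$ is only continuously small. My approach is to avoid solving any global PDE and instead realize $\tilde g$ as a composition of many elementary smooth volume-preserving perturbations, each supported on a small cell of a very fine cubic grid on $U$. On each cube $Q$ of mesh $\eta \ll \eps$, the local average $\eta^{-n} \int_Q (\phi - 1)\, dx$ is of the order of $\|\phi - 1\|_{C^0}$, and one designs an explicit smooth volume-preserving diffeomorphism, supported in a small neighborhood of the face between $Q$ and a neighbor $Q'$, that transports a thin layer of volume across the face so as to cancel the excess of $\tilde g \circ \tilde g_{\mathrm{prev}}$ on $Q$ against the deficit on $Q'$. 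Because each elementary correction is supported on a set of diameter $O(\eta)$ and pushes volume of amplitude $O(\eta \cdot \|\phi - 1\|_{C^0})$, its $C^1$-norm is bounded by $\|\phi - 1\|_{C^0}$ independently of $\eta$. Propagating the corrections along a spanning tree of the cube-adjacency graph gives a composition whose total $C^1$-deviation from the identity is controlled by $\|\phi - 1\|_{C^0}$, not by any Hölder seminorm.

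The step I expect to be the main obstacle is precisely the $C^1$ estimate on the composition of these elementary maps, and the verification that the resulting $\tilde g$ is smooth (it will be a finite composition of smooth maps, so smoothness is automatic, but the number of factors blows up as $\eta \to 0$). Two auxiliary technical points also need care: first, the boundary behavior, which is handled by working with $f$ that is the identity outside $K$ and extending $\phi = 1$ outside the support of the error; second, the gluing between charts in the partition-of-unity step, where one must approximate each chart-supported factor of $f$ and then recompose, keeping the $C^1$ errors additive in a controlled way. Once $\tilde g$ is constructed, $g := \tilde f \circ \tilde g^{-1}$ is smooth, volume-preserving, and $C^1$-close to $f$, proving Theorem \ref{dens}.
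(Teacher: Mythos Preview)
Your approach has a genuine gap at the spanning-tree step, and it is precisely the obstruction that makes the $r=1$ case hard. When you push the volume defect of a cube $Q$ into its neighbor $Q'$, the defect of $Q'$ becomes its own plus that of $Q$; along the tree these accumulate, so at an edge $e$ near the root the flux you must send across is $\sum_i d_i$ summed over essentially all $\sim\eta^{-n}$ cubes on one side, where $d_i=\int_{Q_i}(\phi-1)$. This sum is only bounded by $\|\phi-1\|_{C^0}$, not by $\eta^n\|\phi-1\|_{C^0}$. Moving that much volume through a single face of area $\eta^{n-1}$ requires a displacement of order $\|\phi-1\|_{C^0}/\eta^{n-1}$, so the $C^1$ norm of that elementary map blows up as $\eta\to 0$. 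Without H\"older control on $\phi$ there is no cancellation to invoke: your discrete problem is exactly a discretized divergence equation, and the Burago--Kleiner and McMullen examples (continuous $\phi$ arbitrarily close to $1$ with no bi-Lipschitz solution of $\det Dh=\phi$) show that $C^0$ smallness of $\phi-1$ is genuinely insufficient. A secondary but real issue is your partition-of-unity reduction: one cannot localize a volume-preserving diffeomorphism by multiplying by cutoffs, so the reduction to ``$f=\id$ outside a compact set'' needs its own argument.

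The paper circumvents all of this by never mollifying $f$ and never solving any global (even discrete) equation. It builds the smooth approximation through the skeleta of a Whitney cube decomposition---first near vertices, then edges, and so on---and at each stage enforces a purely local ``fairness'' condition measured against the \emph{original} conservative map $f$: near each $m$-cell the approximation must send the correct amount of mass into each adjacent $n$-cube, as seen through $f^{-1}$. Because $f$ is itself volume preserving, fairness can be achieved by a local mass-moving perturbation (the paper's Lemma~\ref{F}) with no long-range propagation. Inductively, fairness at lower cells forces the volume obstruction at the final ($n$-cell) step to vanish, and a high-regularity Dacorogna--Moser extension completes the construction. The essential idea you are missing is to use $f$ itself as background data so that every decision is local; your mollify-then-correct scheme discards that information and is thereby forced into the global problem that Burago--Kleiner obstructs.
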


Let us point out that the corresponding regularization theorem for
conservative flows was obtained much earlier by Zuppa \cite {Zup}
in 1979.  In fact, in
a more recent approach of Arbieto-Matheus \cite {AM}, it is shown that
a result of Dacorogna-Moser \cite {DM}
allows one to reduce to a local situation where
the regularization of vector fields which are divergence free can be treated
by convolutions.  However,
attempts to
reduce the case of maps to the case of flows through a suspension
construction have not been succesful.

Let us discuss a bit an approach to this problem which is succesful in
higher regularity,
and the difficulties that appear when considering $C^1$ conservative
maps.  Let us assume for simplicity that $M$ and $N$ are compact, as all
difficulties are already present in this case.
Let $f \in C^r_\vol(M,N)$, and let $\omega_M$ and $\omega_N$ be the
smooth volume forms.  Approximate $f$ by a smooth
non-conservative map $\tilde f$.  Then $\tilde f^* \omega_N$ is $C^{r-1}$
close to $\omega_M$.  If we can solve the equation $h^* \tilde f^*
\omega_N=\omega_M$ with $h$ $C^r$ close to $\id$ then the desired
approximation could be obtained by taking $\tilde f \circ h$.  Looking at
the local problem one must solve to get $h$, it is
natural to turn our attention to the $C^r$ solutions of the
equation $\det Dh=\phi$ where $\phi:\R^n \to \R$
is smooth and close to $1$.

Unfortunately, though $\phi$ is smooth, we only know apriori that the
$C^{r-1}$ norm of $\phi$ is small.  This turns out to be quite sufficient to
get control on $h$ if $r \geq 2$, according to the Dacorogna-Moser
technique.  But when $r=1$, the analysis of the equation is different, as
was shown by Burago-Kleiner \cite {BK} and McMullen \cite {McM}.
This is well expressed in the following result, Theorem 1.2 of \cite {BK}:
{\it Given $c>0$ there exists a continuous function
$\phi:[0,1]^2 \to [1,1+c]$,
such that there is no bi-Lipschitz map $h:[0,1]^2 \to \R^2$ with
$\det Dh=\phi$.}

This implies that continuous volume forms on a
$C^\infty$ manifold need not be $C^1$ equivalent to smooth volume forms.
This is in contrast with the fact that all smooth volume forms are $C^\infty$
equivalent up to scaling \cite {M}, and the differential topology
fact that all $C^1$ structures on a $C^\infty$ manifold
are $C^1$ equivalent.

\begin{rem}

One can define a $C^r_\vol$ structure on a manifold as a maximal atlas whose
chart transitions are $C^r$ maps preserving the usual volume of $\R^n$ (see
\cite {T}, Example 3.1.12).
Then Theorem \ref {dens} (and its equivalent for higher differentiability
\cite {Z}) can be used to conclude that any $C^r_\vol$ structure is
compatible with a $C^\infty_\vol$ structure (unique up to
$C^\infty_\vol$-diffeomorphism by \cite {M}), by
following the proof of the corresponding statement for $C^r$-structures (see
Theorem 2.9 of \cite {H}).  For $r \geq 2$, a $C^r_\vol$ structure
is the same as a $C^r$ structure together with a $C^{r-1}$ volume form by
\cite {DM},
but not all continuous volume forms on a $C^\infty$ manifold arise from a
$C^1_\vol$ structure, by Theorem 1.2 of \cite {BK} quoted above.

We notice also the following amusing consequence of Theorem 1.2 of \cite
{BK}, which we leave as an exercise:
{\it A generic continuous volume form on a $C^1$ surface has no
non-trivial symmetries, that is,
the identity is the only diffeomorphism of the
surface preserving the volume form.}  This highlights that the correct
framework to do $C^1$ conservative dynamics is the $C^1_\vol$ category (and
not $C^1$ plus continuous volume form category).

\end{rem}

The equation $\det Dh=\phi$ has been studied also in other regularity
classes (such as Sobolev) by Ye \cite {Y} and Rivi\`ere-Ye \cite {RY},
but this has not helped with
the regularization theorem in the $C^1$ case.


The approach taken in this paper is very simple, ultimately constructing
a smooth approximation by taking independent linear approximations
(derivative) in a very dense set, and carefully modifying and gluing them
into a global map (with a mixture of bare-hands technique and some
results from the PDE approach {\it in high regularity}).  A key point is to
enforce that the choices involved in the construction are made through a
{\it local} decision process.  This is useful to avoid
long-range effects, which if left out would lead us to a
discretized version of the PDE approach in low regularity, with the
associated difficulties.  To ensure locality, we use the original
unregularized map $f$ as {\it background data} for making the decisions.
The actual details of the procedure are best understood by going through the
proof, since the difficulties of this problem lie in the details.

\subsection{Dynamical motivation}

In the discussion below, we restrict ourselves to diffeomorphisms of compact
manifolds for definiteness.

There is a good reason why the regularization problem for conservative maps
has been first introduced in a dynamical context.  In dynamics, low
regularity is often used in order to be able to have available the strongest
perturbation results, such as the Closing Lemma \cite {P}, the
Connecting Lemma \cite {Ha} and the simple but widely used
Franks' Lemma \cite {F}.  Currently such results are
only proved precisely for the
$C^1$ topology (even getting to $C^{1+\alpha}$ would be an amazing
progress), except when considering one-dimensional dynamics.  On the other
hand higher regularity plays a fundamental role when distortion needs to be
controlled, which is the case for instance when
the ergodic theory of the maps is the focus
($C^{1+\alpha}$ is a basic hypothesis of
Pesin theory, and for most
results on stable ergodicity such as \cite {BW}, though more
regularity is necessary if KAM methods are involved \cite {RH}).
While dynamics in the smooth and the low regularity worlds may often seem to
be different altogether (compare \cite {BV} and \cite {BV1}),
it turns out that their characteristics can be often combined
(both in the conservative and the dissipative setting), yielding for
instance great flexibility in obtaining interesting examples: see the
construction of non-uniformly hyperbolic Bernoulli maps \cite {DP} which
uses $C^1$-perturbation techniques of \cite {B}.

In the dissipative and symplectic settings, regularization theorems have
been an important
tool in the analysis of $C^1$-generic dynamics: for instance, Zehnder's
Theorem is used in the proof of \cite {ABW} that ergodicity is $C^1$-generic
for partially hyperbolic
symplectic diffeomorphisms.
\footnote {It was this fact
indeed that convinced the author to work on Theorem \ref {dens}.}
Thus it is natural to expect that Theorem \ref {dens} will lead to
several applications on $C^1$-generic conservative dynamics.  Indeed many
recent results have been stated about certain properties of
$C^2$-maps being dense in the $C^1$-topology, without being able to conclude
anything about $C^1$-maps only due to the non-availability of Theorem \ref
{dens}.  Thus it had been understood for some time that proving
Theorem \ref {dens} would have many immediate applications.
Just staying with examples in the line of \cite {ABW}, we point out that
\cite {BMVW} now implies that ergodicity is $C^1$-generic for partially
hyperbolic maps with one-dimensional center
(see section 4 of \cite {BMVW}), and
the same applies to the case of two-dimensional center, in view of the recent
work \cite {RRTU}.

Though we do not aim to be exhaustive in the discussion of applications
here, we give a few other examples which were pointed out to us
by Bochi and Viana:

\noindent 1.
Any $C^1_\vol$-robustly transitive
diffeomorphism admits a dominated splitting (conjectured, e.g., in
\cite {BDP}, page 365), a result obtained for $C^{1+\alpha}$
diffeomorphisms in \cite {AM} using a Pasting Lemma.  (We note that
this work also allows one to extend the
Pasting Lemma of \cite {AM} itself, and hence its other consequences,
to the $C^1$ case.)

\noindent 2.
A $C^1$-generic consevative non-Anosov diffeomorphism has only
hyperbolic sets of zero Lebesgue measure.
Zehnder's Theorem has been used in \cite {B} and \cite {BV1} to
achieve this conclusion in the symplectic case, and such a result is
necessary for the conclusion of
the central dychotomy of \cite {B}.  It is based on a statement
about $C^2$ conservative maps obtained in \cite {BV2}, so the conclusion for
conservative maps now follows directly from Theorem \ref {dens}.
We hope that results in this direction will play a role in further
strengthenings of \cite {BV1}.

\noindent 3.
The existence of locally generic non-uniformly hyperbolic ergodic
conservative diffeomorphisms with non-simple Lyapunov spectrum \cite {BonV},
\cite {Tz} (the proof, conditional to the existence of regularization, is
nicely sketched in page 260 of \cite {BDV}).

\subsection{Outline of the proof}

Our basic idea is to construct the approximation of a diffeomorphism ``from
inside'', growing it up through a growing frame while paying attention to
compatibilities.

Let us think first of the case where we have a $C^1_\vol$ map
$f:\R^n \to \R^n$, whose derivative is bounded and uniformly continuous.  We
wish to approximate $f$ by a $C^\infty_\vol$ map, $C^1$-uniformly.  We
break $\R^n$ into small cubes with vertices in a multiple of the lattice
$\Z^n$.  In each cube, the derivative of $f$ varies little.  Thus $f$
restricted to each cube admits certainly a nice $C^\infty$ approximation: in
fact, we can just approximate it by an affine map.  Annoyingly, those
approximations do not match.

Our next attempt is to build the approximation more slowly.  First we will
construct an approximation in a neighborhood of the set of vertices of the
cubes, then extend it to an approximation near the set of edges, etc.
Progressing through the $k$-faces of the cubes, $0 \leq k \leq n$, we will
eventially get a map defined everywhere.

The first step is easy: consider a small $\epsilon$-neighborhood $V_0$
of the set
of vertices of all the cubes.  In this set, we can define an approximation
$f_0$ of $f$ which is just affine in each connected component.  Next,
consider a
$\epsilon^2$-neighborhood $V_1$ of the set of edges.  The connected
components of $V_1 \setminus V_0$ intersect, each, a single edge.  We can
extend $f_0|V_0 \cap V_1$ to a map $f_1$ defined in $V_1$: the extension
argument follows \cite {DM}, and is based on the fact that $f_0$ admits a
nice $C^\infty$ (apriori non-volume preserving) extension.  This extension,
behaves well at the scale of the cubes (after rescaling to unit size, the
extension is $C^\infty$-close to affine), which yields the estimates
necessary to apply the (high regularity) Dacorogna-Moser argument.

We repeat this process until getting a map $f_{n-1}$ defined in a
neighborhood $V_{n-1}$ of the faces
of the cubes.  It is important to emphasize that, along this process, all
decisions taken are local: for instance, to know what to do near an edge we
only need to look at what we have done near the vertices of this edge.  This
eliminates long range effects in the process.

At the last moment however, we face a new
difficulty: there is an obstruction to the extension of $f_{n-1}$ to a
volume preserving map.  In fact, $\R^n \setminus V_{n-1}$ is disconnected,
and for an extension (close to $f$)
to exist, the boundary $P$ (topological sphere)
of each hole must be mapped,
under $f_{n-1}$ to a topological sphere $P'$ such that the bounded
components of $\R^n \setminus P$ and $\R^n \setminus P'$ have the same
volume.

To account for this, one could try to modify the map $f_{n-1}$, so that the
volume of the ``holes in the image'' is the same as the volume of the
``holes in the domain''.  In fact, if
we have a volume preserving map such as $f_{n-1}$, defined in a neighborhood
of the boundaries
of the cubes, it is easy to modify it to ``shift mass'' between adjacent
``holes in the image''.  We could try to correct an increasing family of
holes: choose one hole and an adjacent one, move mass so that the first one
becomes fine, then choose another adjacent hole to the second one, move
mass, etc.  But this introduces possible long range effects:
the decisions taken early on, in some specific place,
affect what we have to do much later, and far away.  Thus it is better to
try to do it simultaneously.  How to
prescribe how much mass should be moved from which hole to which hole? 
Trying to solve this takes as to some difference equation: we are given a
function $d$ from the set of cubes to $\R$ (measuring the excess or
deficit of volume of the ``hole in the image''), and we want to find some
function $s$
from the set of faces to $\R$ such that the sum of $s$ over all faces of
each cube equals $d$.  This is just some discretized form of the divergence
equation, and we do not want to follow this path, since, as described
before, the divergence equation is hard to solve in the regularity we are
dealing with.

We will instead proceed differently, being careful to make the
constructions of $f_0$,...,$f_{n-1}$, so that the problem will not show up
at the last step: we make the corrections along the way, which breaks the
problem into simple ones (we want to be able to make local decisions).
To make the decisions, we use an important guide: the
``background'' map $f$, which is known to be volume preserving.
When constructing $f_0$, we make
sure that $f_0$, near each vertex $p$, is fair to all cubes $C$
that have $p$ as a
vertex: thus if $B$ is the connected component of $V_0$ containing $p$,
we want that $f_0(B \cap C)$ and $f(B \cap C)$ have the same volume.  This
can be done, starting with a careless attempt at defining $f_0$, such as the
one considered before, by a ``moving mass'' argument, which this time
has no long range effects.  Later, when defining $f_1$ near an edge
$q$, the fairness property of $f_0$
will allow us to be fair to all cubes that have $q$ as an edge.  This goes
on until $f_{n-1}$, when we find out that the fairness condition implies
that there is no problem with the holes any more.  We can then extend
$f_{n-1}$ to the desired approximation $f_n$ of $f$.

This concludes the argument in this case.  We can adapt this argument to
deal with, instead of the entire $\R^n$, some domain in $\R^n$.  We just
need to consider a suitable decomposition into cubes which has locally
bounded geometry, and the Whitney decomposition will do.  In fact, we can
prove a more detailed result about domains, with ``matching conditions''
(thus, if $f$ is already smooth somewhere, we do not need to modify $f$
there along the approximation\footnote {We note that this kind of result
is more relevant for ``Pasting Lemma'' applications \cite {AM} than Theorem
\ref {dens} itself.}).
Once the case of
domains in $\R^n$ is taken care of,
we can deal with the case of manifolds
as well by a triangulation argument, building the approximation through
vertices, edges, etc., but with a much easier argument (since we can
prescribe matching conditions).

This paper is organized as follows.
We first describe the kind of extension
result we will repeatedly make use of, obtained using the Dacorogna-Moser
technique.  Then we show how to move mass between cubes, to
achieve fairness.  Next, we formulate and prove
a version of the approximation
theorem with matching conditions.  We conclude with the application of this
result to the case of manifolds.

{\bf Aknowledgements:}  I am grateful to Jairo Bochi for
stimulating conversations while this work was under way, to Carlos Matheus
for confirming the proof, and to Federico Rodriguez-Hertz for explaining to
me what makes the immediate ``reduction to the case of flows'' not so
straightforward.  The writing of this note was significantly improved thanks
to detailed comments provided by Amie Wilkinson.
This research was partially conducted during the period
A.A. served as a Clay Research Fellow.

\section{Extending conservative maps}

Fix two connected open sets with smooth boundary
$B_1,B_2 \subset \R^n$ with $\overline B_1 \subset B_2$ and $\overline B_2
\setminus B_1$ smoothly
diffeomorphic to $\partial B_1 \times [0,1]$.
For the proof of Theorem \ref {dens}, we will need the following
slight variations of Theorems 2  and 1
of Dacorogna-Moser \cite {DM}.

\begin{thm}

Let $\phi:\R^n \to \R$ be a $C^\infty$ function with $\int \phi=0$ supported
inside $B_1$.
Then there exists $v \in C^\infty(\R^n,\R^n)$ supported inside $B_2$
with $\div v=\phi$.
Moreover, if $\phi$ is $C^\infty$ small then
$v$ is $C^\infty$ small.

\end{thm}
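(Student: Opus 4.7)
The plan is to build $v$ in two stages: first solve the divergence equation globally via the Newtonian potential (producing a smooth, but not compactly supported, solution), then truncate and correct the resulting error inside the collar $B_2 \setminus \overline{B_1}$ using a Bogovskii-type solver.

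\textbf{Stage 1 (global smooth solution).} Let $u$ be the Newtonian potential of $\phi$, so that $\Delta u = \phi$ with $u \in C^\infty(\R^n)$, and set $v_0 := \nabla u$. Then $\div v_0 = \phi$ and $v_0 \in C^\infty(\R^n,\R^n)$. Since $\supp\phi \subset B_1$ is confined to a fixed compact, Calder\'on--Zygmund and Schauder estimates give $\|v_0\|_{C^k(K)} \leq C_{K,k}\|\phi\|_{C^{k-1}}$ on every fixed compact $K$, so $C^\infty$-smallness of $\phi$ transfers to $v_0$ on compacts.

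\textbf{Stage 2 (cutoff).} Fix $\chi \in C_c^\infty(B_2)$ with $\chi \equiv 1$ on a neighborhood of $\overline{B_1}$, and set $\tilde v := \chi v_0$. Then $\tilde v \in C^\infty(\R^n,\R^n)$ has support in $B_2$, and
\[
\div \tilde v = \chi\phi + \nabla\chi \cdot v_0 = \phi + g, \qquad g := \nabla\chi \cdot v_0,
\]
where $g \in C_c^\infty(B_2 \setminus \overline{B_1})$ and, by Stokes combined with $\int\phi = 0$, $\int g = 0$.

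\textbf{Stage 3 (compactly supported correction).} Construct $w \in C_c^\infty(B_2,\R^n)$ with $\div w = g$, depending continuously on $g$ in $C^\infty$; then $v := \tilde v - w$ satisfies all three conclusions. Since $B_2$ is a bounded connected smooth open set and $g \in C_c^\infty(B_2)$ has mean zero, one obtains $w$ via the Bogovskii integral operator---a compactly supported refinement of Dacorogna--Moser Theorem~2, obtained by covering $B_2$ by finitely many star-shaped subdomains (using the collar structure to produce such a cover near $\partial B_2$) and gluing via a partition of unity---which provides continuous linear right inverses of $\div$ in every $C^k$ norm.

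The main obstacle is Stage 3: Dacorogna--Moser in its original form yields only Dirichlet solutions, whose extension by zero is merely Lipschitz, not $C^\infty$. Upgrading to compactly supported smooth solutions requires the Bogovskii-style construction, which crucially exploits the fact that $g$ already vanishes in a neighborhood of $\partial B_2$ so that the explicit integral representation respects compact support; the $C^\infty$-continuity of the resulting right inverse is what transfers smallness of $\phi$ all the way to $v$.
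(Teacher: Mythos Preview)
Your argument is correct but follows a different route from the paper's. You build a global solution via the Newtonian potential, cut it off, and then repair the error $g$ supported in the collar by invoking a Bogovskii-type operator to produce a compactly supported correction. The paper instead starts from the Dacorogna--Moser Dirichlet solution $w$ on $\overline{B_2}$ (so $\div w=\phi$, $w|\partial B_2=0$) and corrects it by a purely de~Rham argument: on the collar $\overline{B_2}\setminus B_1$ one has $\div w=0$, so the dual $(n-1)$-form $w^*$ is closed, and the boundary condition forces its period over $\partial B_2$ to vanish, hence $w^*=d\alpha$ there; extending $\alpha$ smoothly into $B_1$ and setting $u^*=d\alpha$ yields a divergence-free field agreeing with $w$ on the collar, so $v=w-u$ (extended by zero) is the desired compactly supported solution.

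The trade-off: your approach is modular and relies on standard analytic operators, but Stage~3 is the real content and requires either citing or rebuilding the Bogovskii machinery for a domain that is not star-shaped (the cover-and-glue step you sketch). The paper's approach needs only the Dacorogna--Moser Dirichlet result as a black box and then an elementary differential-forms extension that directly exploits the collar hypothesis $\overline{B_2}\setminus B_1\cong\partial B_1\times[0,1]$; in particular it sidesteps entirely the issue you flag, that a Dirichlet solution extended by zero is not smooth, by instead modifying $w$ \emph{inside} $B_1$ so that it already vanishes on the whole collar. Both arguments transfer $C^\infty$-smallness for the same reason: every step is a bounded linear (or explicitly controlled) operation on $\phi$.
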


\begin{proof}

Theorem 2 of \cite {DM} states in a more general context,
that there exists $w:\overline B_2 \to \R^n$ with
$\div w=\phi$ and $w|\partial B_2=0$, and if $\phi$ is $C^\infty$ small then
$w$ is also $C^\infty$ small.  It is thus
enough to find some $C^\infty$ $u:\overline B_2 \to \R^n$ (small if $\phi$
is small) with $\div u=0$ and $u|\overline B_2 \setminus B_1=w$, and let
$v|\overline B_2=w-u$, $v|\R^n \setminus \overline B_2=0$.  This procedure
is the standard one already used in \cite {DM}.

There is a duality between smooth vector fields $u$ and smooth
$n-1$-forms $u^*$, given by
$u^*(x)(y_1,...,y_{n-1})=\det(u(x),y_1,...,y_{n-1})$.
The duality transforms the equation $\div u=0$ into $du^*=0$.  The form
$w^*$ is thus closed in $\overline B_2 \setminus B_1$, and the boundary
condition $w|\partial B_2=0$ implies that it is exact in $\overline B_2
\setminus B_1$.  Solve the equation $d\alpha=w^*$ in
$\overline B_2 \setminus B_1$ and extend $\alpha$ smoothly to
$\overline B_2$ (notice that $\alpha$ can be required to be small
if $w$ is small).  Let $u$ be a vector
field on $\overline B_2$ given by $d \alpha=u^*$.
Then $u|\overline B_2 \setminus B_1=w$, and since $d u^*=0$ in
$\overline B_2 \setminus B_1$, we have $\div u|\overline B_2 \setminus
B_1=0$.
\end{proof}


\begin{thm}

Let $\phi:\R^n \to \R$ be a $C^\infty$ function with $\int \phi=0$
supported inside $B_1$.
Then there exists $\psi \in C^\infty(\R^n,\R^n)$ with $\psi-\id$
supported inside $B_1$ such that $\det \psi=1+\phi$.
Moreover, if $\phi$ is $C^\infty$-small then
$\psi-\id$ is $C^\infty$ small.

\end{thm}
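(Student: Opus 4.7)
The plan is to prove this via Moser's deformation trick, with the preceding theorem supplying the divergence-free modification needed to get strict compact support. Interpolate between the constant density $1$ and the target density $1+\phi$ through $\omega_t=(1+(1-t)\phi)\,dx$ for $t\in[0,1]$; assuming $\phi$ is $C^0$-small enough (the regime the statement actually cares about), $\omega_t$ is a genuine volume form for all $t$. I would seek a smooth isotopy $\psi_t$, $\psi_0=\id$, satisfying $\psi_t^*\omega_t=\omega_0=(1+\phi)\,dx$. At $t=1$ this becomes $\det D\psi_1=1+\phi$, which is exactly the desired identity (with $\psi:=\psi_1$).

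Differentiating $\psi_t^*\omega_t=\omega_0$ in $t$ and setting $X_t=\dot\psi_t\circ\psi_t^{-1}$ yields $L_{X_t}\omega_t=-\dot\omega_t=\phi\,dx$, which for top forms reduces to $d(i_{X_t}\omega_t)=\phi\,dx$. To produce an $X_t$ with the correct support, apply the previous theorem to obtain a $C^\infty$ vector field $Y$ with $\div Y=\phi$ and $\supp Y\subset B_1$, which is $C^\infty$-small when $\phi$ is; to secure strict compact support in $B_1$ (rather than only $Y|_{\partial B_1}=0$), apply the previous theorem with an intermediate open set $B_1'$ containing $\supp\phi$ playing the role of $B_1$ and the current $B_1$ playing the role of $B_2$. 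Now set $X_t=Y/(1+(1-t)\phi)$; a one-line calculation gives $i_{X_t}\omega_t=i_Y(dx)$, hence $d(i_{X_t}\omega_t)=(\div Y)\,dx=\phi\,dx$, as required.

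Since $X_t$ is $C^\infty$, compactly supported in $B_1$, and $C^\infty$-small whenever $\phi$ is, its flow $\psi_t$ exists for all $t\in[0,1]$, $\psi_t-\id$ is compactly supported in $B_1$, and $\psi_t-\id$ is $C^\infty$-small whenever $\phi$ is. Taking $\psi=\psi_1$ yields the required map. I expect the main subtlety to be not the Moser step itself but ensuring that $\psi-\id$ has support compactly contained in $B_1$, rather than only vanishing on $\partial B_1$ as in the original Dacorogna--Moser statement; the intermediate-domain trick above is precisely what exploits the extra flexibility granted by the previous theorem of this section.
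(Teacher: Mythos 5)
Your proposal is correct and follows essentially the same route as the paper: the Dacorogna--Moser/Moser flow with the time-dependent vector field $X_t = v/(1+(1-t)\phi)$, where $v$ comes from the preceding divergence theorem. You are in fact more careful than the paper's one-sentence proof on the support issue (the preceding theorem as stated only gives $v$ supported in $B_2$, and your intermediate-domain trick is exactly what is needed to get $\psi-\id$ supported in $B_1$ as claimed).
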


\begin{proof}

As in \cite {DM}, the solution is
given explicitly as $\psi=\psi_1$ where
$\psi_t(x)$ is the solution of the differential equation $\frac {d} {dt}
\psi_t(x)=v(\phi_t(x))/(t+(1-t) (1+\phi(\psi_t(x))))$
with $\psi_0(x)=x$ and $v$ comes from the previous theorem.
\end{proof}

\begin{cor} \label {infty}

Let $K$ be a compact set, $U$ be a neighborhood of $K$ and let
$f \in C^\infty(\R^n,\R^n)$ be $C^\infty$ close to the identity and
such that $f|U$ is volume preserving.  Assume that for every
bounded connected component $W$ of $\R^n \setminus K$, $W$ and
$f(W)$ have the same volume.  Then there exists
a $C^\infty$ conservative map close to the identity such that
$\tilde f=f$ on $K$.

\end{cor}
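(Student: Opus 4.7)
The idea is to construct a smooth map $h:\R^n \to \R^n$, $C^\infty$-close to the identity, with $h-\id$ supported off $K$ and with $\det Dh = \det Df$ everywhere, and then set $\tilde f := f \circ h^{-1}$. The chain rule immediately gives $\det D\tilde f\equiv 1$; since $h=\id$ on $K$ we have $h^{-1}=\id$ there as well, so $\tilde f=f$ on $K$; and the $C^\infty$-closeness of both $f$ and $h$ to $\id$ yields closeness of $\tilde f$.

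To build $h$, put $\phi := \det Df - 1$. Then $\phi$ is $C^\infty$-small and vanishes on $U\supset K$. For each bounded connected component $W$ of $\R^n\setminus K$, $\supp\phi\cap W$ is a closed subset of $W$ disjoint from $\partial W\subset K\subset U$, and hence is a compact subset of $W$; moreover the volume hypothesis gives
\[
\int_W \phi \, dx \;=\; \vol(f(W)) - \vol(W) \;=\; 0.
\]
Pick nested smooth domains $B_1 \Subset B_2 \Subset W$ with $\supp\phi\cap W\subset B_1$ and apply the preceding theorem to $\phi|_W$ extended by zero. This yields $\psi_W\in C^\infty(\R^n,\R^n)$, $C^\infty$-close to $\id$, with $\psi_W-\id$ supported in $B_1\subset W$ and $\det D\psi_W = 1+\phi$. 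The supports of the various $\psi_W-\id$ are pairwise disjoint subsets of $\R^n\setminus K$, so the gluing $h := \psi_W$ on each bounded $W$ and $h := \id$ elsewhere is unambiguous and globally smooth; since any compact set meets only finitely many components, $h$ is $C^\infty$-close to $\id$ in the Whitney topology, hence a diffeomorphism of $\R^n$, and $\tilde f := f\circ h^{-1}$ satisfies the conclusion.

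\textbf{Main obstacle.} The one delicate point is the unbounded component $W_\infty$ of $\R^n\setminus K$, for which the volume-matching hypothesis is not directly supplied and $\supp\phi\cap W_\infty$ need not be compact. In the intended applications this is automatic: either $f$ coincides with the identity off a compact set, so that $\supp\phi$ is globally compact, $\int\phi = 0$, and summing over components forces $\int_{W_\infty}\phi = 0$, reducing $W_\infty$ to exactly the same treatment as the bounded components; or else $K$ is arranged so that $\R^n\setminus K$ has no unbounded component at all. Apart from this packaging issue, the corollary is simply a componentwise application of the preceding theorem glued to the identity on $K$.
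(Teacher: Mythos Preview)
Your approach matches the paper's: correct the Jacobian componentwise in $\R^n\setminus K$ via the preceding theorem, then post-compose $f$ with the inverse. The execution is right for the bounded components, but your treatment of the unbounded component $W_\infty$ leaves a gap in the proof of the corollary as stated. Of your two suggested escapes, option (b) --- that $\R^n\setminus K$ have no unbounded component --- is impossible since $K$ is compact; option (a) --- that $f-\id$ already be compactly supported --- is not a hypothesis, so ``in the intended applications this is automatic'' does not prove the statement as written.

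The paper closes this gap by a one-line reduction you omit: since the conclusion only requires $\tilde f=f$ on $K$, one may first modify $f$ away from $K$ so that $f-\id$ becomes compactly supported (cut off $f-\id$ outside a large ball containing all bounded components of $\R^n\setminus K$). This keeps all hypotheses intact, makes $\supp\phi$ compact, and forces $\int_{\R^n}\phi=0$; subtracting the bounded-component contributions gives $\int_{W_\infty}\phi=0$, and $W_\infty$ is then handled exactly like the others. One minor correction: your claim that ``any compact set meets only finitely many components'' of $\R^n\setminus K$ is false in general (take $K$ a Cantor set in the line), but what you actually need --- and what makes the gluing a finite composition as in the paper --- is that $\supp\phi$, being compact and disjoint from $K$ after the reduction, meets only finitely many components, which follows from an easy accumulation argument.
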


\begin{proof}

We may modify $f$ away from $K$ so that $f-\id$ is compactly supported and
$\det f-1$ is supported inside some open set $\tilde B_1$, which can be
assumed to have smooth boundary, disjoint from some neighborhood of $K$.
Let $m$ be the number of connected components of $\tilde B_1$.
We can assume that each connected component of $\R^n \setminus K$
contains at most one connected component of
$\tilde B_1$ (otherwise we just enlarge $\tilde B_1$ suitably).
For each connected component
$B^i_1$ of $\tilde B_1$, select a small
$\epsilon$-neighborhood $B^2_i$ of $\overline B^i_1$.  Let $\phi_i$ be
given by $\phi_i|B_1^i=\det f-1$ and $\phi|\R^n \setminus B_1^i=0$.  Then
$\int \phi_i=0$.  Indeed,
if $B_1^i$ is contained in a bounded connected component of
$\R^n \setminus K$, this follows immediately
from $f$ preserving the volumes of such sets, and if $B_1^i$ is contained in
the unbounded component $W$ of $\R^n \setminus K$, one uses that $f$
preserves the volume of $W \cap B$ for all sufficiently large balls $B$ (to
see this one uses that $f-\id$ is compactly supported).  Applying the
previous theorem, one gets maps $\psi_i$ with $\psi_i-\id$ supported inside
$B_1^i$.  We then take $\tilde f=f \circ \psi_1^{-1} \circ \cdots \circ
\psi_m^{-1}$.
\end{proof}

\section{Moving mass}

In this section we will consider the $L^\infty$ norm in $\R^n$.  The closed
ball of radius $r>0$ around $p \in \R^n$ will be denoted by $B(p,r)$
(this ball is actually a cube).  The canonical basis of $\R^n$ will be
denoted by $e_1,...,e_n$.

\begin{lemma} \label {F}

Fix $0<\delta<1/10$.
Let $S \subset \{1,...,n\}$ be a subset with $0 \leq k \leq n-1$ elements. 
Let $P \subset \R^n$ be the (finite)
set of all $p$ of the form $\sum_{i \notin S} u_i
e_i$ with $u_i=\pm 1$.  Let $B=\cup_{p \in P} B(p,1)$, and let $B'$ be the
open $\delta$-neighborhood of $B$.  Let $W$ be a
Borelian set whose $\delta$-neighborhood is contained in $B$ and which
contains $B(0,\delta)$.  If $F \in
C^1_\vol(B',\R^n)$ is $C^1$ close to the identity,
then there exists $s \in C^\infty_\vol(\R^n,\R^n)$ such that
\begin{enumerate}
\item $s|\inter B(0,10)$ is $C^\infty$ close to the identity,
\item $\vol F(s(W)) \cap B(p,1)=\vol W \cap B(p,1)$ for $p \in P$,
\item $s$ is the identity outside the $\delta$-neighborhood the subspace
generated by the $\{e_i\}_{i \notin S}$.
\end{enumerate}

\end{lemma}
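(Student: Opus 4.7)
The plan is to realize condition~(2) by constructing a finite-parameter family of small volume-preserving modifications $s$ supported in the $\delta$-neighborhood of the subspace $V$ spanned by $\{e_i : i \notin S\}$, and then solving for the parameters via the inverse function theorem.

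Set $d_p := \vol(W \cap B(p,1)) - \vol(W \cap F^{-1}(B(p,1)))$ for $p \in P$. Since $F$ preserves volume and the $\delta$-neighborhood of $W$ lies inside $B = \bigcup_p B(p,1)$, the preimages $F^{-1}(B(p,1)) \cap W$ essentially partition $W$, so $\sum_p d_p = 0$; each $|d_p|$ is $O(\|F - \id\|_{C^0})$, hence small. Using volume-preservation of $F$, condition~(2) is equivalent to requiring that $s$ produce the change $\vol(s(W) \cap F^{-1}(B(p,1))) - \vol(W \cap F^{-1}(B(p,1))) = d_p$.

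Regard $P$ as the vertex set of the $(n-k)$-dimensional hypercube graph $G$ (vertices $p,p'$ are adjacent iff they differ in one coordinate $e_i$, $i \notin S$). Since $G$ is connected and $\sum_p d_p = 0$, the discrete divergence equation $\sum_{e \ni p} \epsilon(e,p)\, t_e = d_p$ has a solution $(t_e)_{e \in E}$ of size $O(\max_p |d_p|)$, where $\epsilon(e,p) \in \{\pm 1\}$ is an orientation sign. For each edge $e = (p,p')$ with direction $e_i$, I build a smooth one-parameter family $\{\sigma_e^t\}_t$ of volume-preserving diffeomorphisms, namely the time-$t$ flow of a compactly supported divergence-free vector field $X_e$ (built from a stream $(n-2)$-form), supported in a thin tube $T_e$ inside the $\delta$-neighborhood of $V$ and passing through the central reservoir $B(0,\delta) \subset W$. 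The tube is arranged so that the $W$-weighted flux of $X_e$ across $F^{-1}(F_{p,p'})$ (the preimage of the common face $F_{p,p'}$ of $B(p,1)$ and $B(p',1)$) is normalized to $1$, while $W$-weighted fluxes through all other cube boundaries vanish to first order. The hypothesis $W \supset B(0,\delta)$ is essential here: it supplies $W$-mass near the origin meeting every $F_{p,p'}$ in a set of positive $(n-1)$-area, making the fluxes nondegenerate; moreover, the tubes for different edges can be routed through disjoint corridors in the slab, since the $n-k \geq 1$ unbounded directions of $V$ afford the room.

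Setting $s := \prod_e \sigma_e^{t_e}$, the induced deficit-change vector equals the signed weighted incidence matrix of $G$ applied to $(t_e)$, modulo an $O((\max|t_e|)^2)$ error from the failure of the $\sigma_e$ to commute and from higher-order shape effects. Since $G$ is connected, this matrix is surjective onto $\{\delta \in \R^P : \sum_p \delta_p = 0\}$, and the implicit function theorem yields parameters $(t_e)$ realizing $(d_p)$ exactly. Conditions~(1) and~(3) then follow from the smallness of $(t_e)$ and the prescribed supports. The main technical obstacle is constructing the individual $X_e$ with first-order effect localized to the two target cubes: a compactly supported divergence-free field has vanishing absolute flux through any hyperplane, so the required transfer is only visible in the $W$-weighted flux, whose nondegeneracy hinges on $W \supset B(0,\delta)$; laying out the tubes so that the composition is additive at first order requires careful use of the slab's geometry.
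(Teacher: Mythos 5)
Your proposal takes a genuinely different route from the paper's proof, though the two share the same underlying ingredients (the hypercube structure on $P$, the reservoir $B(0,\delta)\subset W$, and thin mass-transfer tubes along edge directions). The paper proceeds sequentially: fix a Hamiltonian path $p_1,\dots,p_{2^{n-k}}$ through the hypercube graph, and correct the volume at one vertex at a time by a shear $s_t^{p_i,p_{i+1}}$ along the edge to the next vertex, with the transfer amount $t$ found by the intermediate value theorem (the paper verifies that $\vol s_t(W)\cap B(p_i,1)$ is \emph{exactly} linear in $t$ for $|t|<\delta/100$, while the perturbation coming from composing with $\tilde F$ is uniformly small). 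The deficit at the last vertex $p_{2^{n-k}}$ vanishes automatically because the total volume $\vol W\cap B$ is conserved. Your approach instead writes down the local discrete divergence equation on the hypercube graph up front, builds all the transfer flows simultaneously, and invokes the inverse function theorem to absorb the quadratic error.

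Both work, but yours demands bookkeeping that the sequential scheme sidesteps. Two points deserve care. First, the ``disjoint corridors'' claim is stronger than you need and is not obviously achievable as stated: tubes for edges in different axis directions all have to thread through the same small reservoir $B(0,\delta)$, and indeed the paper's own cylinders $C(p,p')$ (centered at $\delta(p+p')/4$, of radius $\delta/4$) \emph{do} overlap near the origin for different $e_l$; the sequential scheme simply never needs to care. You can drop disjointness and let the IFT absorb the non-commutativity error, as you already hedge, but then the disjointness sentence is a red herring. Second, the linearization you invoke is the \emph{$F$-dependent} weighted incidence matrix (the weights are $W$-weighted fluxes across $F^{-1}(F_{p,p'})$, which depend on the unknown $F$); one should say explicitly that these weights are uniformly close to the nominal ones for $F$ $C^1$-close to the identity, so that surjectivity of the linearization, and hence the size of the IFT neighborhood, is uniform in $F$. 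Modulo these points the argument is sound. It is worth noting that the paper's Hamiltonian-path scheme is implicitly solving your discrete divergence equation on a spanning tree by forward substitution, so the two proofs are closer than they first appear; the paper simply avoids ever naming the linear system and trades the IFT for the more elementary IVT.
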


\begin{proof}

Notice that there are $2^{n-k}$ elements in $P$.
Call two elements $p,p' \in P$ adjacent if $p-p'=\pm 2 e_l$
for some $1 \leq l \leq n$.

Let $p,p'$ be adjacent.
Let $q=q(p,p')=\delta (p'+p)/4$, and let $C=C(p,p')$ be the cylinder
consisting of all
$z \in \R^n$ of the form $z+t(p'-p)$ where $t \in \R$ and $z \in
B(q,\delta/4)$.  Let $\phi=\phi^{p,p'}:\R^n \to [0,1]$ be a
$C^\infty$ function such that $\phi(q)=1$,
$\phi|\R^n \setminus C=0$ and
$\phi(z+t (p-p'))=\phi(z)$ for $t \in \R$.  For $z \in \R$, let
$s_t=s_t^{p,p'} \in C^\infty_\vol(\R^n,\R^n)$ be given by
$s_t(z)=z+t \phi(z) (p-p')$.

Let us show that for $|t|<\delta/100$, we have
\begin{align*}
\vol s_t(W) \cap B(p,1)-\vol &W \cap B(p,1)\\
&=\vol s_t(C \cap B(0,\delta)) \cap
B(p,1)-\vol C \cap B(0,\delta) \cap B(p,1)\\
&=t \int_{B(0,1/2)} \phi(z) dz.
\end{align*}
Indeed, since the $\delta$-neighborhood of $W$ is contained in $B$, and
$s_t$ is the identity outside $C$, if $z \in W \cap B(p,1)$ belongs
(respectively, does not belong)
to $C \cap B(0,\delta)$ then $s_t(z)$ belongs (respectively, does not
belong) to $B(p,1)$ as well.  Since $C \cap B(0,\delta) \subset W$, this
justifies the first equality.  The second equality is a straightforward
computation.

Let $B''$ be the $\delta/2$ open neighborhood of $B$.
It is easy to see that if
$\tilde F \in C^1_\vol(B'',\R^n)$ is $C^1$ close to the
identity then $\vol \tilde F(s_t(W)) \cap
B(\tilde p,1)=\vol \tilde F(W) \cap B(\tilde p,1)$ for every $\tilde p \in P
\setminus \{p,p'\}$, since in this case we actually have
$\tilde F(s_t(W)) \cap B(\tilde p,1)=\tilde F(W) \cap B(\tilde p,1)$.

We claim that there exists $t \in \R$ small such that
$\vol \tilde F(s_t(W)) \cap B(p,1)=\vol W \cap B(p,1)$.
Indeed, for $|t|<\delta/100$,
$\vol \tilde F(s_t(W)) \cap B(p,1)-\vol s_t(W) \cap B(p,1)$ is small if
$\tilde F$ is close to the identity.  For $|t|<\delta/100$, we can directly
compute $\vol s_t(W) \cap B(p,1)-\vol W \cap
B(p,1)=t \int_{B(0,1/4)} \phi(z) dz$.  Thus the claim
follows from the obvious continuity of $t \mapsto \vol \tilde F(s_t(W)) \cap
B(p,1)$ for $|t|<\delta/100$.

As a graph, $P$ is just an hypercube, so there exists
an ordering $p_1,...,p_{2^{n-k}}$ of the elements of $P$ such
that for $1 \leq i \leq 2^{n-k}-1$,
$p_i$ and $p_{i+1}$ are adjacent.  Given $F$, we define sequences
$F_{(l)} \in C^1_\vol(B'',\R^n)$,
$s_{(l)} \in C^\infty_\vol(\R^n,\R^n)$,
$0 \leq l \leq 2^{n-k}-1$ by induction as follows.  We let
$s_{(0)}=\id$, $F_{(l)}=F \circ s_{(l)}$ for $0 \leq l \leq 2^{n-k}-1$, and
for $1 \leq l \leq 2^{n-k}-1$ we let $s_{(l)}=s^{p,p'}_t
\circ s_{(l-1)}$ for $1 \leq l \leq
2^{n-k}-1$, where $p=p_i$, $p'=p_{i+1}$, and $t$
is given by the claim applied with $\tilde F=F_{(l-1)}$.  As long as $F$ is
sufficiently close to the identity, we get inductively that $F_{(l)}$ is
close to the identity, so this construction can indeed be carried out.

Let us show that $s=s_{(2^{n-k}-1)}$ has all the required properties.
Properties (1) and (3) are rather clear.  By construction, we get
inductively that
$\vol F_{(l)}(W) \cap B(p,1)=\vol W \cap B(p,1)$ for $p \in
\{p_1,...,p_l\}$, so it is clear that $\vol F(s(W)) \cap B(p,1)=\vol W \cap
B(p,1)$ except possibly for $p=p_{2^{n-k}}$.  But $\sum_{p \in P}
\vol F(s(W)) \cap B(p,1)=\vol F(s(W)) \cap B=\vol W \cap B=\sum_{p \in P}
\vol W \cap B(p,1)$, so we must have $\vol F(s(W)) \cap B(p,1)=\vol W \cap
B(p,1)$ also for $p=p_{2^{n-k}}$, and property (2) follows.
\end{proof}

\comm{
Moreover, $\vol \tilde F(s_t(W)) \cap
B(p,1)$ is a continuous function of $t$.

Let $r=2^{n-k}-1$.  As a graph, $P$ is just an hypercube, so there exists
an ordering $p^1,...,p^{2^{n-k}}$ of the elements of $P$ such
that for $1 \leq i \leq r$, $p_i$ and $p_{i+1}$ are adjacent.

For every $1 \leq i \leq r$, we define the following objects.
Let $q^i=\delta (p^{i+1}+p^i)/4$, and let $C_i$ be the set of all $z \in
\R^n$ of the form $q_i+z$ where $z=(z_1,...,z_n)$ satisfies $|z_k| \leq
\delta/4$, $k \neq j(i)$.  Let $\phi_i:\R^n \to [0,1]$ be a $C^\infty$
function such that $\phi_i(q_i)=1$, $\phi|\R^n \setminus C_i=0$ and
$\phi_i(z+t e_{j(i)})=\phi_i(z)$ for $t \in \R$.  For $z \in \R$, let
$s_{i,t} \in C^\infty_\vol(\R^n,\R^n)$ be given by
$s_{i,t}(z)=z+t \phi(z) e_{j(i)}$.

Notice that $s_{i,t}$ is the identity in $B(p_k,1+\delta/4)$ for every
$k \neq i,i+1$.  Moreover, there exists $\epsilon_i \neq
0$ (not necessarily positive)
such that if $|t| \leq \delta/10$
then $\vol(s_{i,t}(W) \cap B(p_i,t))-\vol(W \cap
B(p_i,1))=\epsilon_i t$.

We claim that for $1 \leq k \leq r$, if $F$ is close enough to $\id$ then
there exists $t_1,...,t_k$ small such that
$F_{(k)}=F \circ
s_{1,t_1} \circ \cdots \circ s_{k,t_k}$ satisfies $\vol F_{(k)}(W)
\cap B(p,1)=\vol W \cap B(p,1)$ for $1 \leq i \leq k$.

Assume that we have already shown that if $F$ is $C^1$ close to the identity
than we can choose small
$t_1,...,t_{k-1}$, $1 \leq k \leq r$, such that $F_{(k)}=F \circ
s_{1,t_1} \circ \cdots s_{k-1,t_{k-1}}$ is such that the volume of
$F_{(k)}(W) \cap B(p_i,1)$ coincides with the volume of
$W \cap B(p_i,1)$ for
$1 \leq i \leq k-1$.  Now $\omega(t)=\vol(F_{(k)}(s_{k,t}(W)) \cap
B(p_k,1))-\vol(W \cap B(p_k,1))$ is a continuous function of $t$.
Moreover, if $F$ is close to the identity, $F_{(k)}$ is also close to
the identity, and hence $\vol(F_{(k)}(s_{k,t}(W)) \cap
B(p_k,1))-\vol(s_{(k,t)}(W) \cap B(p_k,1))$ is close to $0$ for $|t| \leq
\delta/10$.  Thus $\omega(t)$ is close to $\epsilon_i t$ for $|t| \leq
\delta/10$.  Thus we can choose $t_k$ small such that $F_{(k+1)}=F_{(k)}
\circ s_{k,t_k}$ satisfies $\vol F_{(k+1)}(W) \cap B(p_k,1)=\vol W \cap
B_{p_k,1}$.  Since $s_{k,t_k}$ coincides with the identity in a neighborhood
of $B(p_i,1)$ for every $1 \leq i \leq k-1$, we have
$\vol F_{(k+1)}(W) \cap B(p_i,1)=\vol F_{(k)}(W) \cap B(p_i,1)$ and hence
$\vol F_{(k+1)}(W) \cap B(p_i,1)=\vol W \cap B(p_i,1)$ by hypothesis.

Applying induction to the previous discussion, we reach $F_{(r+1)}=F \circ
s_{1,t_1} \circ \cdots \circ s_{r,t_r}$ such that
$\vol F_{(r+1)}(W) \cap B(p_i,1)=\vol W \cap B(p_i,1)$ for $1 \leq i \leq
r$.  Since $F_{(r+1)}(W) \subset B$, it follows that
$\vol F_{(r+1)}(W) \cap B(p_{r+1},1)=\vol W \cap B(p_{r+1},1)$ as well.

The result follows.
\end{proof}

Let $0<\delta<1/10$.
Let $S$ be a canonical subspace.
Consider severals closed balls $B_i=B(p_i,r_i)$, $1 \leq i \leq k$,
such that $1/2 \leq r_i \leq 2$,
$\inter B_i \cap \inter B_j=\emptyset$ if $i \neq j$ and $B_i \supset S \cap
B(0,1/2)$.  Let $F \in C^1_\vol(\inter B(0,10),\R^n)$, and let $W$ Borelian
set such that $\cup B_i$ contains the $\delta$-neighborhood of $W$ and $W$
contains $B(0,\delta)$.  If $\|F-\id\|_{C^1}$
is sufficiently small, there exists $s \in C^\infty_\vol(\R^n,\R^n)$,
such that
\begin{enumerate}
\item $s$ close to $\id$ in $C^\infty_\vol(B(0,9),\R^n)$,
\item the volumes of $F(s(W)) \cap B_i$ and of $W \cap B_i$ are the same for
$1 \leq i \leq k$,
\item $s=\id$ outside the $\delta$-neighborhood of the orthogonal complement
to $S$.
\end{enumerate}

\end{lemma}

\begin{proof}

Let $T$ be the orthogonal complement to $S$.

and whose union contains a $\delta$-neighborhood of $0$ in $\R^n$.
Let $F$ be a $C^1$
map defined in the ball of radius $10 \delta$ which is $C^1$-close to the
identity and such that $\det DF=1$.
Then there exists a $C^\infty$ map $v:\R^n \to \R^n$ such that.
}

\section{Proof of Theorem \ref {dens}}

\subsection{Charts}

If $U \subset \R^n$ is open and
$f:U \to \R^n$ is a bounded $C^r$ map with bounded derivatives up to order
$r$, we let $\|f\|_{C^r}$ be the natural $C^r$ norm.

\begin{thm} \label {W}

Let $W$ be an open subset of $\R^n$ and let $f \in C^1_\vol(W,\R^n)$
be a map with bounded uniformly continuous derivative.  Let $K_0 \subset W$
be a compact set such that $f$ is $C^\infty$ in a neighborhood of $K_0$. 
Let $U \subset W$ be open.  Then
for every $\epsilon>0$ there exists $\tilde f \in C^1_\vol(W,\R^n)$
such that $\tilde f|U$ is $C^\infty$, $\tilde f$ coincides with $f$ in $W
\setminus U$ and in a neighborhood of $K_0$, and $\|f-\tilde
f\|_{C^1}<\epsilon$.

\end{thm}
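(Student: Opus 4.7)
The plan follows the blueprint given in the outline. First I would reduce to building a smooth volume-preserving perturbation of $f$, $C^1$-small and compactly supported in $U\setminus K_0'$, where $K_0'$ is a small neighborhood of $K_0$ on which $f$ is already $C^\infty$. Take a Whitney-type dyadic cube decomposition of $U\setminus K_0'$, so that cube sidelengths are comparable to the distance to $\partial U\cup\partial K_0'$ and shrink to zero toward these matching regions. The locally bounded combinatorics of this decomposition, together with the uniform continuity of $Df$, ensure that on each cube, after rescaling to unit size, $f$ is $C^1$-close to an affine map, with estimates uniform in the cube.

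Next I would construct smooth volume-preserving approximations $f_0, f_1,\ldots, f_{n-1}$ on nested tubular neighborhoods $V_0\subset V_1\subset\cdots\subset V_{n-1}$ of the $k$-skeleta of the cube complex, with $V_k$ having thickness of order $\eta^{k+1}$ relative to the local cube sidelength. The base case $f_0$ is taken affine on each component of $V_0$ (each containing one vertex), matching $f$ to first order. Inductively, on each component of $V_k\setminus V_{k-1}$, which meets a unique $k$-face, one first extends $f_{k-1}$ by a Whitney-type smooth extension to a $C^\infty$ map close to the local affine model but not volume-preserving; then one corrects it to a volume-preserving $f_k$ via the Dacorogna--Moser conservative correction stated above. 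This can be carried out in high regularity because, after rescaling the cube to unit size, the Jacobian error is $C^\infty$-small --- a point that is precisely not available if one tries to solve the divergence equation directly in $C^1$.

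The crucial invariant maintained throughout is \emph{fairness}: for each cube $C$ of the decomposition, $\vol f_k(V_k\cap C)=\vol f(V_k\cap C)$. Fairness supplies the vanishing-integral condition $\int\phi=0$ in the next Dacorogna--Moser step, and at $k=n-1$ it guarantees that each hole $C\setminus V_{n-1}$ is mapped by $f_{n-1}$ to a region whose boundary encloses the correct volume, so that Corollary \ref{infty} applied separately in each hole produces the final $C^\infty$ conservative extension $\tilde f$. To restore fairness after each extension step, apply Lemma \ref{F} in a rescaled chart around each $k$-face, with the background map $f$ (composed with the inverse of the local affine model) playing the role of $F$; crucially, the support of the correction $s$ is confined to a tube transverse to the face, leaving fairness on $V_{k-1}$ undisturbed.

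The main obstacle is the simultaneous control, across the $n$ inductive layers, of three things: $C^1$-closeness to $f$, so that errors do not accumulate; strict locality of every correction, so that lower-dimensional fairness is preserved and no long-range mass transport is needed; and uniform applicability of the Dacorogna--Moser extension at every Whitney cube, which depends delicately on uniform continuity of $Df$ to guarantee a uniform affine approximation after rescaling. Matching with $f$ on $W\setminus U$ and near $K_0$ is then automatic: near $\partial K_0'$ one uses the already-smooth $f$ itself as the local model in place of an affine map, and near $\partial U$ the vanishing cube sidelengths together with the $C^1$-smallness of each local correction force $\tilde f\to f$ continuously.
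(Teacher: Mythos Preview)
Your outline is correct and matches the paper's proof: a Whitney cube decomposition, inductive construction of smooth volume-preserving approximations through the $k$-skeleta using Corollary~\ref{infty} for the extension step and Lemma~\ref{F} (applied with $F=\lambda_x^{-1}\circ f^{-1}\circ h\circ\lambda_x$) to restore the fairness invariant, with fairness at the last stage supplying exactly the hole-volume hypothesis needed to fill in the $n$-cells. The one refinement worth noting is that for $k\le n-1$ the extension via Corollary~\ref{infty} does \emph{not} actually require fairness---a neighborhood of the $k$-skeleton is a full set (its complement in $\R^n$ has no bounded component), so the volume hypothesis there is vacuous; fairness is carried along purely so that the final step $k=n$ succeeds.
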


\begin{proof}

We will consider the $L^\infty$ metric in $\R^n$.  Let $\theta>0$ be such
that the $\theta$-neighborhood of $K_0$ is contained in $W$ and $f$ is
$C^\infty$ in it.  We will now introduce a Whitney decomposition of $U$.

If $0 \leq m \leq n$,
An $m$-cell $x$ is some set of the form
$\prod_{k=1}^n [2^{-t} a_k,2^{-t} (a_k+b_k)]$ where
$t \in \Z$, $a_k
\in \Z$ and $b_k \in \{0,1\}$ with $\#\{b_k=1\}=m$.  For $m \geq 1$,
we let its interior $\inter x$ be
$\prod_{k=1}^n (2^{-t} a_k,2^{-t} (a_k+b_k))$, while for $m=0$ we let
$\inter x=x$.  Let $\partial x$ be $x \setminus \inter x$.

We say that an $n$-cell $x$ is $\epsilon$-small if its diameter is at most
$\epsilon$, and every $n$-cell of the
same diameter as $x$ which intersects $x$ is contained in $U$.
We say that a dyadic $n$-cell is $\epsilon$-good if it is a maximal (with
respect to inclusion) $\epsilon$-small $n$-cell.  We say that a dyadic
$m$-cell, $0 \leq m \leq n-1$,
is $\epsilon$-good if it is the intersection of all $\epsilon$-good
$n$-cells that intersect its interior.

By construction, the interiors of distinct
cells are always disjoint, and their union covers $U$.  This is what we
meant by a Whitney decomposition of $U$.

Given $\epsilon>0$,
we say that an $\epsilon$-good $m$-cell $x$ has rank $t=t(x)$ if the
minimal diameter of the
$\epsilon$-good $n$-cells containing it is $2^{-t}$ (if $m>0$, $2^{-t}$
is just the diameter of $x$).  The rank is designed
to give a measure of the
intrinsic scale of the $\epsilon$-good cells near $x$, so the more
cumbersome definition is needed to be meaningful for $m=0$.
Notice that if $x,y$ are $\epsilon$-good
cells and $x \cap y \neq \emptyset$ then $|t(x)-t(y)| \leq 1$ (otherwise
either $x$ or $y$ would not satisfy the maximality requirement of an
$\epsilon$-good $m$-cell).  Each
$\epsilon$-good $m$-cell $x$ is contained in $2^{n-m}$ $n$-cells of diameter
$2^{-t(x)}$, called neighbors of $x$ (which are not necessarily
$\epsilon$-good).

Fix some small $\epsilon>0$.  From now on, by $m$-cell we will understand an
$\epsilon$-good $m$-cell.
Let $N_m$ be the set of $m$-cells.
By construction, the interiors of distinct
cells are always disjoint, and their union covers $U$.  This is what we
meant by a Whitney decomposition of $U$.
The local geometry of the Whitney decomposition has some bounded complexity
(depending on the dimension): there exists
$C_0=C_0(n)$ such that each $m$-cell contains at most $C_0$ $k$-cells, $0
\leq k \leq m$.
Moreover, each $x \in N_m$ is the union of the interior of the $k$-cells, $0
\leq k \leq m$, contained in $x$.

For $x \in N_m$, let $D(x)$ be the
$2^{-10 (m+1)} 2^{-t(x)}$
neighborhood of $x$, and let $I(x)=\cup D(y)$ where the
union is taken over all proper subcells $y \subset x$.  Thus $I(x)$ is a
neighborhood of the boundary of $x$.
Let $B(x)=D(x) \cup I(x)$ (a somewhat larger neighborhood of $x$)
and $J(x)=D(x) \setminus I(x)$ (thus $J(x)$ is obtained by truncating
a neighborhood of $x$ near the boundary of $x$).
Notice that if $x$
and $y$ are distinct cells, $J(x)$ and $J(y)$ are disjoint.  Let $R(x)$ be
the interior of the union of all $n$-cells intersecting $x$.  Thus $R(x)$
is again a
neighborhood of $x$, larger than $B(x)$ and $D(x)$.  Notice that
the $2^{-100 (m+1)} 2^{-t(x)}$-neighborhood of
$J(x)$ contained in the interior of the union of the neighbors of $x$.

For a cell $x$, let $b$ be its baricenter, let $\lambda_x:\R^n \to \R^n$ be
given by $\lambda_x(z)=b+2^{-t(x)+1} z$, and let $H_x(z)=f(b)+Df(b)(z-b)$.
We say that $h \in C^\infty_\vol(R(x),\R^n)$ is $x$-nice if
$h-f$ is $C^1$-small,
$(\lambda_x^{-1} \circ h \circ \lambda_x)-(\lambda_x^{-1} \circ
H_x \circ \lambda_x)$ is $C^\infty$ small, and for every neighbor $y$ of
$x$, $\vol f^{-1}(h(J(x))) \cap y=\vol J(x) \cap y$.  Notice that this last
condition implies that for every $y
\in N_n$ containing $x$, $\vol f^{-1}(h(J(x))) \cap y=\vol J(x) \cap y$.

A family $\{h_x\}_{x \in N_m}$ is said to be nice if each
$h_x$ is $x$-nice and
$\|h_x-f\|_{C^1} \to 0$ uniformly as $\rank(x) \to \infty$.
Let $\rho=2^{-100 n}$.  We will now construct inductively
nice families $\{\tilde h_x\}_{x \in N_m}$, $0 \leq m \leq
n$ such that $\tilde h_x=\tilde h_y$ in a
$2^{-t(x)} \rho^{m+1}$-neighborhood of $B(y)$ whenever $y$ is a subcell of
$x$, and such that if $x$ is
$2^{-(m+1)} \theta$-close to $K_0$ then $\tilde h_x=f|R(x)$.

Let $x \in N_0$.  If $\epsilon$ is small and $x$ is $\theta/2$-close to
$K_0$, then $\tilde h_x=f|R(x)$ is $x$-nice.
Otherwise, if $\epsilon$ is sufficiently small, then by
a small $C^1$ modification of $H_x$ we obtain a map
$\tilde h_x$ which is $x$-nice.
The easiest way to see this is to first
conjugate by $\lambda_x$, bringing things to unit scale.  More precisely, we
get into the setting of Lemma \ref {F} (with $k=0$, hence $S=\emptyset$)
by putting $F=\lambda_x^{-1} \circ f^{-1} \circ H_x \circ \lambda_x$ and
$W=\lambda_x^{-1}(J(x))$.  Let $s$ be the map given by the Lemma \ref {F}.
Then $\tilde h_x=H_x \circ \lambda_x \circ s \circ \lambda_x^{-1}$
is $x$-nice.  Moreover, $\{\tilde h_x\}_{x \in N_0}$ is a nice family
since the estimates improve as the rank grows (indeed, as the rank grows,
one looks at smaller and smaller scales, and the derivative varies less and
less).

\comm{
We have that
$\lambda_x^{-1} D(x)$ is a small but definite (in the sense that there is a
lower bound on its size only depending on $n$) ball around $0$ and the map
$\lambda_x^{-1} H_x \circ \lambda_x$ is $C^1$-close to
$\lambda_x^{-1} f \circ \lambda_x$ in a large ball (say, or radius $10$)
around $0$.  Let $y_1$,...,$y_r$ be the
$n$-cells containing $x$ (clearly $r \leq 2^n$),
ordered so that $y_i$ and $y_{i+1}$ intersect along some $n-1$-cell.
Let $p \in \partial y_2 \cap \partial y_1$ be some point
which maximizes the distance to $\partial (\partial y_1 \cap \partial y_2)
\cup y_3 \cdots \cup y_r$.

Below, if $\Delta$ is a non-empty open subset of
some hyperplane of $\R^n$, by a cylinder with cross section $\Delta$ we will
undertand the union
of all lines orthogonal to the hyperplane and
intersecting $\Delta$.  Take a cylinder $\tilde \Delta$
containing a small but definite ball
around $\lambda_x(p)$, and whose cross section is contained in
$\lambda_x(\partial y_1 \cap \partial y_2)$.  Let $\kappa:\R^n \to [0,1]$ be
a function constant along

By taking the cross section
small, we ensure that the cylinder is at a definite distance of $y_3 \cup
\cdots y_r$.  We can then precompose $H_x$ with a map $s_1:\R^n \to \R^n$
such that $s_1$ fixes each line orthogonal to the hyperplane containing
$\partial y_1 \cap \partial y_2$, is the identity outside the cylinder
$\lambda_x^{-1}(\tilde \Delta)$, and restricted to the infinite
lines contained in $\lambda_x^{-1}(\tilde \Delta)$ it is a translation
it translatestranslating appropriately
to ``move mass around'' so that the volumes of
$H_x(s_1((J(x))) \cap f(y_1)$ and $J(x) \cap y_1$ are equal.
We then repeat the procedure, moving mass around between
$y_k$ and $y_{k+1}$ inductively for $2 \leq k \leq r-1$.
If $\epsilon$ was sufficiently small,
so that $H_x$ was sufficiently close to $f$,
we will obtain an $x$-nice map $\tilde h_x=H_x \circ s_1 \circ \cdots \circ
s_{r-1}$.  Moreover, the construction yields a nice family $\{\tilde
h_x\}_{x \in N_0}$.
}

Let now $1 \leq m \leq n-1$ and assume that for every $k \leq m-1$
we have defined a nice family $\{\tilde h_x\}_{x \in N_k}$ with the required
compatibilities.

If $x \in N_m$ intersects a
$2^{-(m+1)} \theta$-neighborhood of $K_0$
just take $\tilde h_x=f|R(x)$ as
definition and it will satisfy the other compatibility by hypothesis.
Otherwise, let $Q$ be the open $\rho^m$-neighborhood of
$B(y)$ and define a map $h_x \in C^\infty_\vol(Q,\R^n)$
such that $h_x=\tilde h_y$ in the
$\rho^m$-neighborhood of $B(y)$ for every subcell
$y \subset x$.  Restricting $h_x$ to the $\rho^m/2$ neighborhood of $I(x)$,
which is a full compact set (that is, it does not disconnect $\R^n$),
since $m \leq n-1$,
and extending it to $R(x)$ using Corollary \ref {infty},
we get $h^{(1)}_x \in C^\infty_\vol(R(x),\R^n)$
which is $C^\infty$ close to $H_x$ after rescaling by $\lambda_x$.
By a $C^1$-small modification of $h^{(1)}(x)$
outside the $\rho$-neighborhood of $I(x)$, we can
obtain a nice family $\{\tilde h_x\}_{x \in N_m}$.  This is an
application of Lemma \ref {F} (with $k=m$)
analogous to the one described before.  This time, we let
$F=\lambda_x^{-1}
\circ f^{-1} \circ h^{(1)}_x \circ \lambda_x$ and $W=\lambda_x^{-1}(J(x))$.
We choose $S$ as the subset of the canonical basis of $\R^n$ which spans the
tangent space to $x$ at some (any) interior point.  Letting $s$ be the map
given by Lemma \ref {F}, the desired maps are given by
$\tilde h_x=h^{(1)}_x \circ \lambda_x \circ s \circ
\lambda_x^{-1}$.

By induction, we can construct the nice families as above for $0 \leq m \leq
n-1$.  Let now $x \in N_n$.  As before, when $x$ is close to $K_0$ the
definition is forced and there is no problem of compatibility by hypothesis.
Otherwise, let $Q$ be the open $\rho^n$-neighborhood of $I(x)$. 
As before, define a map $h_x:Q \to \R^n$ by gluing the definitions of
$\tilde h_y$ for subcells of $x$.  Notice that
$\R^n \setminus I(x)$ has two
connected components, and the bounded one is contained in $x$.  By
construction, $I(x)$ is the disjoint union of the $J(y)$ contained in it. 
Thus the volumes of $h_x(I(x)) \cap f(x)$ and $I(x) \cap x$ are equal.  This
implies that the bounded component of $\R^n \setminus h_x(I(x))$ has the
same volume as the bounded component of $I(x)$.  We can restrict $h(x)$
to the $\rho^n/2$ neighborhood of $I(x)$
and extend it to a map $\tilde h_x \in C^\infty_\vol(R(x),\R^n)$
which is $x$-nice using Corollary \ref {infty}
(after rescaling by $\lambda_x$ and
then rescaling back).
Thus we obtain a nice family $\{\tilde h_x\}_{x \in N_n}$ with all
the compatibilities.

The nice family $\{\tilde h_x\}_{x \in N_n}$ is such that whenever two
$n$-cells $x$ and $y$ intersect we have $\tilde h_x=\tilde h_y$ in a
neighborhood of the intersection.
Let $\tilde f:W \to \R^n$ be defined by $\tilde f(z)=f(z)$, $z \notin U$ and
$\tilde f(z)=\tilde h_x(z)$ for
every $z \in x$, $x \in N_n$.  Then $\tilde f \in C^1_\vol(W,\R^n)$, since
near $\partial U \cap W$ the rank of a $n$-cell $x$
is big and hence $\|\tilde h_x-f|R(x)\|_{C^1}$ is small.  Moreover
$\|\tilde f-f\|_{C^1}$ is small everywhere, and $\tilde f=f$
in a neighborhood of $K_0$ by construction.
\end{proof}

\subsection{Manifolds}

We now conclude the proof of Theorem \ref {dens} by a triangulation
argument.  Triangulate $M$ so that for every simplex $D$ there are
smooth charts $g_i:W_i \to \R^n$, $\tilde
g_i:\tilde W_i \to \R^n$ such that $f(W_i)
\subset \tilde W_i$ and $D$ is precompact in $W_i$.  Such charts may be
assumed to be volume preserving by \cite {M}.

Enumerate the vertices. 
Apply Theorem \ref {W} in charts to smooth $f$ in a neighborhood of the
first vertex without changing $f$ in a neighborhood of
simplices that do not contain this vertex.  Repeat with the subsequent
vertices.  Now suppose we have already
smoothed $f$ in a neighborhood of $m$-simplices, for some $0 \leq m \leq n-1$.
Enumerate the $m+1$-simplices and apply Theorem \ref {W} in
charts to smooth it in a neighborhood of the first $m+1$-simplex,
without changing it in a neighborhood of simplices that do not contain it
(in particular we do not change it near its boundary).
Repeat with the subsequent $m+1$-simplices.  After $n$ steps we will have
smoothed $f$ on the whole $M$.


\begin{thebibliography}{99}

\bibitem
{AM}
Arbieto, A.; Matheus, C. A pasting lemma and some applications
for conservative systems.
Ergodic Theory Dynam. Systems  27  (2007),  no. 5,
1399--1417.

\bibitem
{ABW} Avila, A.; Bochi, J.; Wilkinson, A.
Ergodicity is $C^1$-generic for partially hyperbolic symplectic
diffeomorphisms.  In preparation.


\bibitem
{B} Bochi, J.
Genericity of zero Lyapunov exponents.
Erg. Th. Dyn. Sys. 22 (2002), 1667-1696.

\bibitem
{BFP} Bochi, J.; Fayad, B.; Pujals, E.
A remark on conservative diffeomorphisms.  C. R. Math. Acad. Sci. Paris  342
(2006),  no. 10, 763--766.

\bibitem
{BV1} Bochi, J.; Viana, M.
The Lyapunov exponents of generic volume-preserving and symplectic maps. 
Ann. of Math. (2)  161  (2005),  no. 3, 1423--1485.

\bibitem
{BV2} Bochi, J.; Viana, M.
Lyapunov exponents: How frequently are dynamical systems hyperbolic?
Modern dynamical systems and applications,  271--297, Cambridge Univ. Press,
Cambridge, 2004.

\bibitem
{BDP} Bonatti, C.; D\'\i az, L.; Pujals, E.
A $C^1$-generic dichotomy for diffeomorphisms: Weak forms of hyperbolicity
or infinitely many sinks or sources.
Ann. Math. 158 (2003), 355-418.

\bibitem
{BDV}
Bonatti, C.; D\'\i az; Viana, M.
Dynamics beyond uniform hyperbolicity. A global geometric and probabilistic
perspective. Encyclopaedia of Mathematical Sciences, 102. Mathematical
Physics, III. Springer-Verlag, Berlin, 2005.

\bibitem
{BMVW} Bonatti, C.; Matheus, C.; Viana, M.; Wilkinson, A.
Abundance of stable ergodicity.  Comment. Math. Helv.  79  (2004),
753--757.

\bibitem
{BV}
Bonatti, C.; Viana, M.
Lyapunov exponents with multiplicity 1 for deterministic products of
matrices.  Erg. Th. Dynam. Sys.  24  (2004),  1295--1330.

\bibitem{BonV}
Bonatti, C.; Viana, M.
SRB measures for partially hyperbolic systems whose central direction is
mostly contracting.  Israel J. Math. 115 (2000), 157--193.

\bibitem
{BK} Burago, D.; Kleiner, B.
Separated nets in Euclidean space and Jacobians of bilipschitz maps.
GAFA 8 (1998), 273-282.

\bibitem
{BW} Burns, K.; Wilkinson, A.
On the ergodicity of partially hyperbolic systems.  To appear in Ann. Math.

\bibitem
{DM} Dacorogna.; Moser, J. On a partial differential
equation involving the Jacobian determinant.  Ann. Inst. H. Poincar\'e Anal.
Non Lin\'eaire  7  (1990),  no. 1, 1--26.

\bibitem
{DP} Dolgopyat, D.; Pesin, Y.
Every compact manifold carries a completely hyperbolic diffeomorphism.
Erg. Th. Dyn. Sys. 22 (2002), 409-435.


\bibitem
{F} Franks, J.  Necessary conditions for the stability of
diffeomorphisms.
Trans. AMS 158 (1971), 301-308.

\bibitem
{Ha} Hayashi, S.  Connecting invariant manifolds and the solution
of the $C^1$-stability  and $\Omega$-stability conjecture for flows.
Ann. Math. 145 (1997), 81-137.

\bibitem
{H} Hirsch, M.
Differential topology.
Graduate Texts in Mathematics, 33. Springer-Verlag, 1994.

\bibitem
{M} Moser, J.
On the volume elements on a manifold.
Trans. AMS 120 (1965), 286-294.

\bibitem
{McM} McMullen, C.
Lipschitz maps and nets in Euclidean space.
GAFA 8 (1998), 304-314.

\bibitem
{PP} Palis, J.; Pugh, C.
Fifty problems in dynamical systems.
Lect. Notes Math. 468 (1975), 345-353.

\bibitem
{P} Pugh, C.
The closing lemma, Amer. J. Math. 89 (1967), 956-1009.

\bibitem
{RY} Rivi\`ere, T.; Ye, D.
Resolutions of the prescribed volume form equation.
Nonlinear differential equations Appl. 3 (1996), 323-369.

\bibitem
{RH} Rodriguez-Hertz, F.
Stable ergodicity of certain linear automorphisms of the torus.  Ann. of
Math. (2)  162  (2005),  no. 1, 65--107.

\bibitem
{RRTU} Rodriguez-Hertz, F.; Rodriguez-Hertz, M.; Tahzibi, A.;
Ures, R.
A criterion for ergodicity of non-uniformly hyperbolic diffeomorphisms. 
Elect. Res. Ann. AMS  14  (2007), 74--81 .

\bibitem{Tz}
Stably ergodic diffeomorphisms which are not partially hyperbolic.
Israel J. Math. 142 (2004), 315--344.

\bibitem
{T} Thurston, W.
Three-dimensional geometry and topology. Vol. 1. Edited by Silvio Levy.
Princeton Mathematical Series, 35. Princeton University Press, 1997.

\bibitem
{Y} Ye, D.
Prescribing the Jacobian determinant in Sobolev spaces.
Ann. IHP Anal. Non Lin\'eaire 11 (1994), 275-296.

\bibitem
{Z} Zehnder, E.
Note on smoothing symplectic and volume preserving diffeomorphisms.
Lect. Notes Math. 597 (1977), 828-854.

\bibitem
{Zup} Zuppa, C.
R\'egularisation $C\sp{\infty }$ des champs vectoriels qui pr\'eservent
l'\'el\'ement de volume.
Bol. Soc. Brasil. Mat.  10  (1979), no. 2, 51--56.

\end{thebibliography}
\end{document}